\newcommand\NN{\mathbb{N}}
\newcommand\QQ{\mathbb{Q}}
\newcommand\CC{\mathbb{C}}
\newcommand\PP{\mathbb{P}}
\newcommand\ZZ{\mathbb{Z}}
\DeclareMathOperator {\APL}{A_{PL}}
\DeclareMathOperator {\Hom}{Hom}
\DeclareMathOperator {\Sing}{Sing}
\DeclareMathOperator {\SP}{SP}
\DeclareMathOperator {\Spec}{Spec}
\theoremstyle{definition}
\newtheorem{theorem}[subsection]{Theorem}
\newtheorem{definition}[subsection]{Definition}
\newtheorem{proposition}[subsection]{Proposition}
\newtheorem{corollary}[subsection]{Corollary}
\newtheorem{lemma}[subsection]{Lemma}
\newtheorem{remark}[subsection]{Remark}
\newtheorem{example}[subsection]{Example}
\title{Infinite symmetric products of rational algebras and spaces}
\author{Jiahao Hu}
\address{Stony Brook University, Department of Mathematics, 100 Nicolls Road, 11794 Stony Brook}
\email{jiahao.hu@stonybrook.edu}
\author{Aleksandar Milivojevi\'c}
\address{Max Planck Institute for Mathematics,
Vivatsgasse 7,
53111 Bonn}
\email{milivojevic@mpim-bonn.mpg.de}
\subjclass[2020]{13A02, 16E45, 55P62}
\keywords{Symmetric products, Dold--Thom theorem}
\begin{document}
\begin{abstract}
We show that the infinite symmetric product of a connected graded-commutative algebra over $\QQ$ is naturally isomorphic to the free graded-commutative algebra on the positive degree subspace of the original algebra. In particular, the infinite symmetric product of a connected commutative (in the usual sense) graded algebra over $\QQ$ is a polynomial algebra. Applied to topology, we obtain a quick proof of the Dold--Thom theorem in rational homotopy theory for connected spaces of finite type. We also show that finite symmetric products of certain simple free graded commutative algebras are free; this allows us to determine minimal Sullivan models for finite symmetric products of complex projective spaces.
\end{abstract}

\maketitle

\section{Introduction}

The construction of symmetric products naturally appears in both algebra and topology. In algebra, given a commutative ring $R$, its $n$--fold symmetric product is the invariant subring $(R^{\otimes n})^{\Sigma_n}$ of its $n$--fold tensor power under the natural action of the $n^\text{th}$ symmetric group $\Sigma_n$ by permuting the factors. The infinite symmetric product is defined as the inverse limit of these objects, in an appropriate sense. The fundamental theorem of symmetric polynomials states that the $n$--fold symmetric product (including the case of taking $n$ to infinity) of the single-variable polynomial ring $\ZZ[x]$ is a free polynomial ring generated by the elementary symmetric functions. In contrast, the $n$--fold symmetric product of the polynomial ring $\ZZ[x_1,\dots,x_m]$ in more than one variable is not smooth after tensoring with $\CC$, let alone free. Geometrically speaking, $(R^{\otimes n})^{\Sigma_n}$ is the ring of regular functions on the affine variety $\Spec(R)^{\times n}/\Sigma_n$ obtained by quotienting the $n$--fold product of $\Spec(R)$ by the natural permutation action of $\Sigma_n$. Consequently the $n$--fold symmetric product of an $m$-dimensional smooth complex algebraic variety is smooth if and only if at least one of $m,n$ is one. Nevertheless, should one tensor $\ZZ[x_1,\dots,x_m]$ with $\QQ$ and let $n$ pass to infinity, then surprisingly the singularities are resolved \cite{V05}, \cite{D99}: the \textit{infinite} symmetric product of $\QQ[x_1,\dots,x_m]$ is a free polynomial algebra.

On the topological side, for any pointed connected topological space $X$, one can form its $n$--fold symmetric product $X^{\times n}/\Sigma_n$, as well as its infinite symmetric product which is the direct limit of these spaces. Through the latter, Dold and Thom revealed a stunning relation between homology and homotopy in the 1950's \cite{DT58}: the homotopy groups of the \textit{infinite} symmetric product of a connected topological space are naturally isomorphic to the reduced homology groups of the original space.

In the two decades to follow, a bridge between algebra and topology was built by Quillen and Sullivan giving us that the rational homotopy theoretic information of a space is encoded in a corresponding differential graded (Lie) algebra over the rationals.

It is our motivation to further understand the relation between the algebraic and topological sides of the (infinite) symmetric product construction through the lens of rational homotopy theory; see \cite{FT10} for work on the rational homotopy types of finite symmetric products, which led the authors to the present note. We give a general structural theorem for the infinite symmetric product of a connected \textit{graded-commutative} algebra over the rationals (Section 2); in particular, we show this algebra is free. In contrast, finite symmetric products are generally not free. However, we show that the finite symmetric products of a free graded-commutative algebra on two generators, one in even degree and one in odd degree, are free. We then give a quick conceptual proof of the Dold--Thom theorem in rational homotopy theory for spaces of finite type (Section 3). Further, we compute minimal Sullivan models (in particular obtaining the rational homotopy groups) for finite symmetric products of complex projective spaces.

\subsection*{Acknowledgements} We thank Yves F\'elix and Daniel Tanr\'e for pointing out to us the results of \cite{ScSt99}. Proposition 3 therein largely overlaps with our \Cref{thm:topology}, but the proofs differ, as Scheerer--Stelzer make use of the Dold--Thom theorem to reach their result; meanwhile, our \Cref{thm:topology}, from which we recover the Dold--Thom theorem, is derived from the algebraic considerations in Section 2 herein. We further thank Steven Sam and Andrew Snowden for helpful comments and suggestions.

\section{Symmetric products of rational algebras}
Henceforth, unless otherwise noted, by an algebra we will always mean a \textit{graded-commutative} algebra over $\QQ$ concentrated in non-negative degrees, denoted by $A=\oplus_{d\ge 0} A_d$. If $a,b$ are two homogeneous elements, then $ab=(-1)^{\deg a \cdot\deg b}ba$. We will denote the subspace of positive degree (resp. degree $\le n$) elements by $A_+$ (resp. $A_{\le n}$).
We say $A$ is \textit{connected} if $A_0$ is generated by $1\in A$ (i.e. $A_0\cong \QQ$). A connected algebra has a canonical augmentation $\varepsilon: A\to \QQ$ by projection onto $A_0$. 
We say $A$ is of \textit{finite type} if $\beta_d:=\dim A_d$ is finite for all $d$.

For $V$ a graded vector space, by $\Lambda(V)$ we denote the free algebra on $V$. For notational convenience, we will sometimes replace $V$ in the notation $\Lambda(V)$ by a homogeneous basis for $V$. For instance, $\Lambda(x)$ denotes the free algebra on a one-dimensional graded vector space, and it is a polynomial algebra if $\deg x$ is even, and an exterior algebra if $\deg x$ is odd. If $V\cong U\oplus W$, then $\Lambda(V)\cong \Lambda(U)\otimes\Lambda(W)$.
\begin{definition} Let $A$ be a connected algebra. We define its $n$--fold symmetric product $\SP^n(A)$ to be $(A^{\otimes n})^{\Sigma_n}$, i.e. the subalgebra of $A^{\otimes n}$ consisting of elements invariant under the action of the $n^\text{th}$ symmetric group $\Sigma_n$ by permuting the factors with the usual Koszul signs (for example, the permutation $(12)$ takes $a\otimes b$ to $(-1)^{\deg a \cdot \deg b} b \otimes a$). The projection
\[
A^{\otimes {n+1}}\to A^{\otimes n}, \quad a_1\otimes \dots\otimes a_n\otimes a_{n+1}\mapsto \varepsilon(a_{n+1}) a_1\otimes \dots \otimes a_n
\]
induces a morphism  $\pi_n^{n+1}: \SP^{n+1}(A)\to \SP^{n}(A)$. We define the infinite symmetric product $\SP(A)$ of $A$ to be the inverse limit of the system $\{\pi_n^{n+1}\}_{n\in\NN}$, that is $\SP(A):=\lim\limits_{\leftarrow n} \SP^n(A)$. For each $m>n$, we denote $\pi_n^m:=\pi_{n}^{n+1}\circ\cdots\circ\pi_{m-1}^{m}: \SP^m(A)\to \SP^n(A)$ and $\pi_n:=\lim\limits_{\leftarrow m} \pi_n^m: \SP(A)\to \SP^n(A)$. It is clear that $\pi_n=\pi_n^{m}\circ\pi_m$.
\end{definition}

We emphasize that this inverse limit is taken in the category of graded-commutative algebras. As such, it can be constructed as follows: first we take the degree-wise inverse limits $\lim\limits_{\leftarrow n} \SP^n(A)_d$, and then we form the direct sum $\bigoplus_d \lim\limits_{\leftarrow n} \SP^n(A)_d$ with its inherited algebra structure.

\begin{remark} Consider the truncated polynomial algebras $\QQ[x]/(x^n)$, which  form an inverse system by quotienting. In the category of commutative algebras, the inverse limit of this system is the formal power series algebra $\QQ[[x]]$. Likewise, if we set $\deg x = 0$, the inverse limit of this system in the category of graded-commutative algebras will be $\QQ[[x]]$. However, if we set $\deg x = 2$, thus placing our system in the category of connected graded-commutative algebras, the inverse limit will be the polynomial algebra $\QQ[x]$. \end{remark}

Now let $A$ be a fixed connected algebra. We consider the algebra map $$\phi_n: \Lambda(A_+)\to \SP^n(A)$$ defined on elements of $A_+$ by $$a\mapsto [a]=a\otimes 1 \otimes \cdots \otimes 1 + \cdots + 1 \otimes \cdots \otimes 1 \otimes a.$$
It is clear from construction that $\phi_n=\pi_n^{n+1}\circ\phi_{n+1}$. We set $$\phi:=\lim\limits_{\leftarrow n}\phi_n:\Lambda(A_+)\to \SP(A).$$

\begin{lemma}\label{lem:onto}
$\phi_n: \Lambda(A_+)\to \SP^n(A)$ is onto.
\end{lemma}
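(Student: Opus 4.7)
The plan is to prove surjectivity by induction along a filtration of $A^{\otimes n}$ that counts ``non-trivial'' tensor slots. Exploiting the canonical splitting $A = \QQ \oplus A_+$ coming from the augmentation $\varepsilon$, let $F_k \subseteq A^{\otimes n}$ be the span of elementary tensors $a_1 \otimes \cdots \otimes a_n$ in which at most $k$ of the $a_i$ lie in $A_+$ and the remaining factors equal $1$. This yields an increasing, $\Sigma_n$--equivariant, multiplicative filtration $F_0 \subset F_1 \subset \cdots \subset F_n = A^{\otimes n}$, and since $[a] \in F_1$, the image $\phi_n(\Lambda(A_+))$ interacts well with this filtration. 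Because we work in characteristic zero, taking $\Sigma_n$--invariants is exact, and the problem reduces by induction on $k$ to showing that the image of $\phi_n$ intersected with $F_k$ surjects onto $(F_k/F_{k-1})^{\Sigma_n}$.

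As a $\Sigma_n$--representation, $F_k/F_{k-1}$ decomposes as $\bigoplus_{|S|=k} A_+^S$, where $A_+^S$ denotes the subspace of tensors with $A_+$--entries at positions in $S$ and $1$'s elsewhere; the $\Sigma_n$--action permutes these pieces, identifying $F_k/F_{k-1}$ with the induced representation $\mathrm{Ind}_{\Sigma_k \times \Sigma_{n-k}}^{\Sigma_n}(A_+^{\otimes k})$. By Frobenius reciprocity one obtains a natural isomorphism $(F_k/F_{k-1})^{\Sigma_n} \cong (A_+^{\otimes k})^{\Sigma_k}$, sending a $\Sigma_k$--invariant $\gamma$ to $\Psi(\gamma) := \sum_{|S|=k} \gamma_S$, where $\gamma_S \in A_+^S$ denotes $\gamma$ placed in the positions of $S$ in their natural order.

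The crux is then the following computation: for homogeneous $b_1, \dots, b_k \in A_+$, the product $[b_1] \cdots [b_k]$ lies in $F_k$, and its image in $(F_k/F_{k-1})^{\Sigma_n}$ corresponds under $\Psi$ to the Koszul-signed symmetrization $\mathrm{Sym}(b_1 \otimes \cdots \otimes b_k) := \sum_{\sigma \in \Sigma_k} \sigma \cdot (b_1 \otimes \cdots \otimes b_k)$ in $(A_+^{\otimes k})^{\Sigma_k}$. Expanding the product as $\sum_{(i_1,\dots,i_k)} b_{1,(i_1)} \cdots b_{k,(i_k)}$, terms with a repeated index occupy at most $k-1$ non-trivial slots and so lie in $F_{k-1}$; grouping the remaining distinct-index terms by their underlying support $S$ and applying the Koszul rule to each ordering $(i_1,\dots,i_k)$ of $S$ yields precisely $\Psi(\mathrm{Sym}(b_1 \otimes \cdots \otimes b_k))$.

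Since we work over $\QQ$, the averaging operator $\tfrac{1}{k!}\mathrm{Sym}$ is a projection onto $\Sigma_k$--invariants, so the symmetrizations $\mathrm{Sym}(b_1 \otimes \cdots \otimes b_k)$ span $(A_+^{\otimes k})^{\Sigma_k}$ as the $b_i$ vary over $A_+$; combined with the crux computation, this completes the inductive step and hence shows that $\phi_n$ surjects onto $\SP^n(A)$. The chief obstacle is the bookkeeping of the Koszul signs in the crux computation, verifying that the signs arising from reordering the factors in the expansion of $[b_1] \cdots [b_k]$ assemble to give exactly the signs defining $\mathrm{Sym}(b_1 \otimes \cdots \otimes b_k)$; once this is checked, the rest of the argument is formal.
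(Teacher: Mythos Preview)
Your argument is correct. Both your proof and the paper's proceed by induction on the number of ``non-trivial'' tensor factors, but the packaging differs. The paper transports the problem to the model $\Lambda^n A \cong \SP^n(A)$ of \cite{FT10} and writes down an explicit recursion expressing $a_1 \wedge \cdots \wedge a_m \wedge 1 \wedge \cdots \wedge 1$ in terms of $\ast$--products of elements with fewer non-trivial entries. You instead work directly inside $A^{\otimes n}$ with the filtration $F_k$, identify the associated graded $\Sigma_n$--invariants via the induced representation $\mathrm{Ind}_{\Sigma_k \times \Sigma_{n-k}}^{\Sigma_n}(A_+^{\otimes k})$ and Frobenius reciprocity, and then match $[b_1]\cdots[b_k]$ modulo $F_{k-1}$ with the Koszul symmetrization. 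Your route is self-contained (it does not rely on the $\Lambda^n A$ isomorphism from \cite{FT10}) and makes transparent exactly where characteristic zero enters, namely in the exactness of taking invariants and in the surjectivity of the symmetrization operator; the paper's route is more hands-on and arrives at the recursion in one line once the $\ast$--product is in hand. The Koszul-sign bookkeeping you flag as the chief obstacle is genuine but routine: for a fixed support $S$ the $k!$ orderings of the indices contribute exactly the $k!$ Koszul-signed permutations of $b_1\otimes\cdots\otimes b_k$, as one checks by tracking the sign $(-1)^{\sum_{p<q,\,i_p>i_q}|b_p||b_q|}$ arising from the graded multiplication in $A^{\otimes n}$.
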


\begin{proof}
For simplicity of computation we will use the isomorphism of algebras $\Lambda^n A \cong \SP^n(A)$ given by \cite[\textsection 2]{FT10} $$a_1 \wedge \cdots \wedge a_n \mapsto \sum_{\sigma \in \Sigma_n} \pm a_{\sigma(1)} \otimes \cdots \otimes a_{\sigma(n)},$$ where the multiplication on $\Lambda^n A$ is determined by $$(a_1\wedge \cdots \wedge a_n) \ast (b_1 \wedge \cdots \wedge b_n) = \sum_{\sigma \in \Sigma_n} \pm (a_1 b_{\sigma(1)}) \wedge \cdots \wedge (a_n b_{\sigma(n)}).$$ For example, we have $(a\wedge 1)\ast (b \wedge 1) = a\wedge b + (ab) \wedge 1$. Under this identification, $\phi_n$ maps $a$ to $\tfrac{1}{(n-1)!} a\wedge 1 \wedge \cdots \wedge 1$. Inductively we see that \begin{align*} a_1 \wedge \cdots \wedge a_{n-k} \wedge 1 \wedge \cdots \wedge 1 &= \tfrac{1}{k+1} (a_1 \wedge \cdots a_{n-k-1} \wedge 1 \wedge \cdots \wedge 1) \ast (a_{n-k} \wedge 1 \wedge \cdots \wedge 1) \\ &+ (\textrm{elements of the form } (a_1' \wedge \cdots \wedge a_{n-k-1}' \wedge 1 \wedge \cdots \wedge 1)),\end{align*} which gives us surjectivity. \end{proof}

\begin{lemma}\label{lem:iso}
	$\phi_n$ maps $\Lambda(A_+)_{\le n}$ isomorphically onto $\SP^n(A)_{\le n}$.
\end{lemma}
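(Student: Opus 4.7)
The plan is to handle surjectivity and injectivity separately. Surjectivity is immediate from \Cref{lem:onto}: since $\phi_n$ preserves the internal grading, any homogeneous element of $\SP^n(A)$ of degree $d \le n$ admits a preimage of the same degree in $\Lambda(A_+)$, which therefore lies in $\Lambda(A_+)_{\le n}$. The substantive content is injectivity.

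For injectivity I will introduce compatible filtrations on both sides. On $\Lambda(A_+)$ use the natural word-length grading $\Lambda(A_+) = \bigoplus_{k \ge 0} \Lambda^k(A_+)$, where $\Lambda^k$ denotes the $k$-fold graded-symmetric power; because each generator of $A_+$ has positive degree, elements of $\Lambda(A_+)_{\le n}$ involve only components with $k \le n$. On $\SP^n(A) \cong \Lambda^n A$, define the increasing filtration $G^k$ to be the linear span of wedges $b_1 \wedge \cdots \wedge b_n$ with at most $k$ of the $b_i$ in $A_+$; this is a ring filtration for $\ast$ since a slot of a product is $1$ only when both incoming slots are $1$. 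Using graded commutativity of $\wedge$, the associated graded $G^k / G^{k-1}$ is canonically identified with $\Lambda^k(A_+)$ via $a_1 \cdots a_k \mapsto [a_1 \wedge \cdots \wedge a_k \wedge 1 \wedge \cdots \wedge 1]$; a basis count in terms of multisets of a homogeneous basis of $A_+$ (with odd basis elements non-repeating) confirms this is an isomorphism of graded vector spaces.

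The heart of the argument is the leading-term calculation
\[
\phi_n(a_1) \ast \cdots \ast \phi_n(a_k) \;\equiv\; \frac{1}{(n-k)!}\, a_1 \wedge \cdots \wedge a_k \wedge 1 \wedge \cdots \wedge 1 \pmod{G^{k-1}}
\]
for $a_i \in A_+$ and $1 \le k \le n$. Starting from $k=1$ and applying the $\ast$-product formula inductively, the contributions to $G^k \setminus G^{k-1}$ come precisely from placing $a_k$ in one of the $n-k+1$ positions still occupied by $1$; the combinatorial constants collapse to the stated coefficient. Consequently the induced map $\Lambda^k(A_+) \to G^k/G^{k-1} \cong \Lambda^k(A_+)$ is multiplication by the nonzero scalar $1/(n-k)!$ for every $1 \le k \le n$. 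Injectivity now follows by a standard filtration argument: if $x = \sum_{k=0}^{n} x_k$ with $x_k \in \Lambda^k(A_+)$ satisfies $\phi_n(x) = 0$ and $k_0$ is the largest index with $x_{k_0} \ne 0$, then $\phi_n(x)$ has the nonzero class $\frac{1}{(n-k_0)!}\, x_{k_0}$ in $G^{k_0}/G^{k_0-1}$, a contradiction. I expect the main obstacle to be careful bookkeeping of Koszul signs and combinatorial constants in the leading-term computation, but graded symmetry of the wedge ensures the signs are consistent throughout.
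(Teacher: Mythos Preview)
Your argument is correct. In fact, the leading–term computation together with the identification $G^k/G^{k-1}\cong\Lambda^k(A_+)$ shows that $\phi_n$ induces an isomorphism on each graded piece for $0\le k\le n$, so your filtration argument even yields surjectivity in degrees $\le n$ without invoking \Cref{lem:onto}; but using it as you do is harmless.

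The route, however, is genuinely different from the paper's. The paper proves injectivity by a dimension count: assuming $A$ is of finite type, it computes the Poincar\'e series of $\SP^n(A)$ via Macdonald's formula and that of $\Lambda(A_+)$ directly, and shows they agree modulo $z^{n+1}$; the general case is then obtained by writing $A$ as a directed colimit of finite-type subalgebras (\Cref{lem:redtofintp}). Your approach replaces this with an explicit filtration argument internal to $\Lambda^n A$: the word-length filtration on $\Lambda(A_+)$ matches the ``number of slots in $A_+$'' filtration on $\Lambda^n A$, and the associated graded of $\phi_n$ is multiplication by the nonzero scalar $1/(n-k)!$. This is more direct in that it avoids the external input of Macdonald's formula and works uniformly without any finite-type hypothesis or colimit reduction. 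The paper's approach, on the other hand, yields as a byproduct the full Poincar\'e series of $\SP^n(A)$ and identifies $G_n(z)$ with the Poincar\'e series of $\SP^n(A_+)$, information your argument does not immediately produce.
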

\begin{proof}
Assume first that $A$ is of finite type. Then by \Cref{lem:onto} it suffices to show $\SP^n(A)_{\le n}$ and $\Lambda(A_+)_{\le n}$ have the same dimension. To do so, we examine the Poincar\'e series of $\SP^n(A)$ and  $\Lambda(A_+)$. Recall the Poincar\'e series of a graded algebra of finite type is a formal power series in a formal variable $z$ whose coefficient of $z^d$ is the dimension $\beta_d$ of its degree $d$ subspace.

By \cite{M62} the Poincar\'e series for $\SP^n(A)$ is the coefficient of $t^n$ in
$$H(z,t)=\prod_{i=0}^\infty\frac{(1+z^{2i+1} t)^{\beta_{2i+1}}}{(1-z^{2i}t)^{\beta_{2i}}}.$$ Since $A$ is connected, $\beta_0=1$. Let $$G(z,t)=(1-t)H(z,t)=\prod_{i=1}^\infty\frac{(1+z^{2i-1}t)^{\beta_{2i-1}}}{(1-z^{2i}t)^{\beta_{2i}}}=\sum_{i=0}^\infty G_i(z)t^i.$$ Then the Poincar\'e series for $\SP^n(A)$ is
$$\sum_{i=0}^n G_i(z).$$ Notice that in each factor of $G$, $t$ is always multiplied with a positive power of $z$, so $G_i(z)$ is divisible by $z^i$. 

On the other hand, we claim the Poincar\'e series of $\Lambda(A_+)$ is $$\prod_{i=1}^\infty\frac{(1+z^{2i-1})^{\beta_{2i-1}}}{(1-z^{2i})^{\beta_{2i}}}=G(z,1).$$ Indeed, since the free algebra on a direct sum of vector spaces is the tensor product of the free algebras on each direct summand, and the Poincar\'e series of a tensor product is the product of the Poincar\'e series of each factor, we are reduced to considering the Poincar\'e series of the free algebra on a one-dimensional graded vector space concentrated, say, in degree $d$. If $d$ is even, then this free algebra is a polynomial algebra whose Poincar\'e series is $(1-z^d)^{-1}$. If $d$ is odd, then it is an exterior algebra whose Poincar\'e series is $1+z^d$.

Hence the dimensions of $\SP^n(A)_{\le n}$ and $\Lambda(A_+)_{\le n}$ are the same, namely $\sum_{i=0}^n G_i(z)$ reduced modulo $z^{n+1}$ and then evaluated at $z=1$. This completes the proof for finite type algebras. The general case follows from the lemma below and from the fact that taking direct limits is exact.
\end{proof}
\begin{remark}
We can define the $n$--fold symmetric product of the augmentation ideal $A_+$ by equipping the permutation invariants of the $n$--fold tensor power of the underlying graded vector space of $A_+$ with the induced multiplication. Since $\phi_{n-1}$ factors over $\pi_{n-1}^n$, it follows from \Cref{lem:onto} that
$\pi_{n-1}^n:\SP^n(A)\to \SP^{n-1}(A)$ is surjective. The kernel of this map is precisely $\SP^n(A_+)$, whose Poincar\'e series is $G_n(z)$. As graded vector spaces, we have $\SP^n(A)\cong \oplus_{i=0}^n \SP^i(A_+)$.
\end{remark}

\begin{lemma}\label{lem:redtofintp}
Let $A$ be a connected algebra and $I=\{B\,|\, B\subseteq A \text { connected of finite type}\}$ be the direct system formed by all connected finite type subalgebras of $A$ and the corresponding inclusions. We have the following induced isomorphisms:
\begin{enumerate}[(1)]
    \item $\lim\nolimits_I \Lambda(B_+)\cong\Lambda(A_+)$,
    \item $\lim\nolimits_I\SP^n(B)\cong\SP^n(A)$.
\end{enumerate}
\end{lemma}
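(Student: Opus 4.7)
The plan is to identify $A$ itself as the filtered colimit $\colim_I B$ over its connected finite type subalgebras, and then deduce both isomorphisms from the fact that the functors $\Lambda((-)_+)$ and $\SP^n$ commute with such filtered colimits over $\QQ$.

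First, I would verify that $I$ is directed with $A \cong \colim_I B$. Any finite subset of $A_+$ generates a connected subalgebra of $A$ which is finite-dimensional in each fixed degree: because $A$ is non-negatively graded and graded-commutative, only finitely many monomials of a given total degree can be formed from finitely many positive-degree generators. Any two members of $I$ are jointly contained in the subalgebra they generate, so the system is directed, and every element of $A$ lies in some such $B$, giving $A = \colim_I B$ in connected graded-commutative algebras.

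For (1), the functor $B \mapsto B_+$ on connected algebras is a natural direct summand of the forgetful functor to graded vector spaces (extracted via the augmentation) and hence preserves colimits, while $\Lambda$ is left adjoint to the forgetful functor from graded-commutative algebras to graded vector spaces and so preserves all colimits. Composing gives $\colim_I \Lambda(B_+) \cong \Lambda\bigl((\colim_I B)_+\bigr) \cong \Lambda(A_+)$.

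For (2), tensor powers commute with filtered colimits of graded $\QQ$-vector spaces (which are computed degreewise), so $\colim_I B^{\otimes n} \cong A^{\otimes n}$. To pass to $\Sigma_n$-invariants, I would invoke characteristic zero: the averaging operator $\tfrac{1}{n!}\sum_{\sigma\in\Sigma_n}\sigma$ is a natural idempotent on $B^{\otimes n}$ whose image is exactly $\SP^n(B)$, exhibiting $\SP^n(B)$ as a natural direct summand of $B^{\otimes n}$. Since filtered colimits preserve the splitting of a naturally defined idempotent, this yields $\colim_I \SP^n(B) \cong \SP^n(A)$. The only step requiring genuine care is the finite-type claim for finitely generated connected subalgebras, where the non-negative grading is essential; the rest is a formal consequence of the good behavior of $\otimes$, $\Sigma_n$-invariants in characteristic zero, and left-adjoint functors with respect to filtered colimits.
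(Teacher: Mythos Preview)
Your argument is correct and takes a more categorical route than the paper. The paper proves both parts by hand: it checks that the canonical maps $\lim_I \Lambda(B_+)\to\Lambda(A_+)$ and $\lim_I\SP^n(B)\to\SP^n(A)$ are injective (each $B\hookrightarrow A$ is an inclusion, and direct limits are exact) and then surjective (any $a\in A_+$, respectively any finite sum of simple tensors in $A^{\otimes n}$, already lives in some finite type $B$, namely the subalgebra generated by the finitely many elements involved). You instead first identify $A=\colim_I B$ and then argue that the functors $\Lambda((-)_+)$ and $\SP^n$ preserve filtered colimits, the former via left-adjointness of $\Lambda$ together with $(-)_+$ being a natural summand, the latter via the natural averaging idempotent $\tfrac{1}{n!}\sum_{\sigma}\sigma$ available in characteristic zero. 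Your approach is cleaner conceptually and transplants immediately to any other functor built from left adjoints and natural retracts; the paper's approach is more elementary and avoids invoking adjunctions or idempotent-splitting altogether.

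One small point to tighten: when you argue directedness you pass from ``a finite subset of $A_+$ generates a finite type subalgebra'' to ``the subalgebra generated by two members of $I$ lies in $I$'', but members of $I$ need not be finitely generated. This is easily repaired: by graded-commutativity the subalgebra generated by $B_1\cup B_2$ is the image of the multiplication map $B_1\otimes B_2\to A$, which is finite-dimensional in each degree since $B_1$ and $B_2$ are; alternatively, replace $I$ by the cofinal subsystem of finitely generated connected subalgebras, for which your first sentence applies directly.
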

\begin{proof}
\begin{enumerate}[(1)]
    \item The inclusion $B_+\hookrightarrow A_+$ clearly induces an injection $\Lambda(B_+)\hookrightarrow \Lambda(A_+)$. Since taking direct limits is exact, we have $\lim\nolimits_I \Lambda(B_+)\hookrightarrow\Lambda(A_+)$. To prove this map is also surjective, it suffices to show that $A_+$ is in the image. For each $a\in A_+$, $a$ is contained in some finite type subalgebra $B$ (for instance, the subalgebra generated by $a$), hence $a$ is in the image of $\Lambda(B_+)\to \Lambda(A_+)$ and therefore in the image of $\lim\nolimits_I \Lambda(B_+)\to\Lambda(A_+)$.
    \item Similarly, the inclusion $B\hookrightarrow A$ induces an injection $B^{\otimes n}\hookrightarrow A^{\otimes n}$. Then from the commutative diagram
    \[
    \begin{tikzcd}
    \SP^n(B)\arrow{r}\arrow[hookrightarrow]{d} & \SP^n(A)\arrow[hookrightarrow]{d}\\
    B^{\otimes n}\arrow[hookrightarrow]{r} & A^{\otimes n}
    \end{tikzcd}
    \]
    we see that the map $\SP^n(B)\to \SP^n(A)$ is injective. Therefore $\lim\nolimits_I\SP^n(B)\hookrightarrow\SP^n(A)$. On the other hand, each element in $\SP^n(A)$ is a symmetrization of some element $\alpha=\sum_{j=1}^k c_j\cdot a_1^j\otimes\cdots\otimes a_n^j\in A^{\otimes n}$, where $c_j\in \QQ$ and $a_i^j\in A$. All of these $a_i^j$'s are contained in some finite type subalgebra $B$ (for instance, the subalgebra generated by these $a_i^j$'s) so the symmetrization of $\alpha$ is in fact contained in $\SP^n(B)$. This proves $\lim\nolimits_I\SP^n(B)\to\SP^n(A)$ is onto, and therefore an isomorphism.
\end{enumerate}
\end{proof}

\begin{theorem}\label{thm:algebra}
Let $A$ be a connected (graded-commutative) algebra over $\QQ$. Then $\phi: \Lambda(A_+)\to\SP(A)$ is an isomorphism.
\end{theorem}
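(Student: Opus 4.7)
The plan is to reduce the isomorphism claim to a degree-wise check and then invoke \Cref{lem:iso} to see that the inverse system defining $\SP(A)$ is eventually constant in each fixed degree. As was emphasized after the definition of $\SP(A)$, the inverse limit is computed degree-wise in the category of graded-commutative algebras, so $\SP(A)_d = \lim_{\leftarrow n} \SP^n(A)_d$; hence it is enough to show that $\phi$ is an isomorphism in every degree $d$.

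Fix $d$ and take $n \geq d$. I would first observe that \Cref{lem:iso} implies the restriction of $\phi_n$ to degree $d$ is an isomorphism onto $\SP^n(A)_d$, since $\Lambda(A_+)_d \subseteq \Lambda(A_+)_{\leq n}$ and $\SP^n(A)_d \subseteq \SP^n(A)_{\leq n}$. The same conclusion applies for every $m \geq n$; combined with the identity $\phi_n = \pi_n^m \circ \phi_m$, this forces $\pi_n^m\colon \SP^m(A)_d \to \SP^n(A)_d$ to be an isomorphism for all $m \geq n \geq d$. Consequently the inverse system $\{\SP^n(A)_d\}_n$ is eventually constant from stage $n = d$ onward, and the projection $\pi_n\colon \SP(A)_d \to \SP^n(A)_d$ is an isomorphism for each $n \geq d$.

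To finish, the defining relation $\pi_n \circ \phi = \phi_n$ together with the fact that both $\pi_n$ and $\phi_n$ are isomorphisms in degree $d$ (for $n \geq d$) will give that $\phi$ itself is an isomorphism in degree $d$. Since $d$ is arbitrary and $\phi$ is a morphism of graded algebras, this will complete the proof.

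I do not expect any substantive obstacle here: the content has already been packed into \Cref{lem:onto} and \Cref{lem:iso}, and the theorem becomes a formal consequence of the observation that those lemmas produce isomorphisms in \emph{all} degrees $\leq n$, which is precisely what is needed to force the inverse system to stabilize degree by degree. The only subtlety worth flagging is that the inverse limit must be taken degree-wise in the category of graded-commutative algebras, as the remark involving $\QQ[x]/(x^n)$ warns; if one instead took a completed limit, the stabilization argument would fail and one would only recover $\Lambda(A_+)$ up to completion.
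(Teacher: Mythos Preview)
Your proposal is correct and follows essentially the same argument as the paper's proof: both use \Cref{lem:iso} to see that $\phi_m$ and $\phi_n$ are isomorphisms in the relevant degrees, deduce that the transition maps $\pi_n^m$ are isomorphisms so that $\pi_n$ is an isomorphism, and then conclude from $\pi_n\circ\phi=\phi_n$. The only cosmetic difference is that you fix a single degree $d$ while the paper works with all degrees $\le n$ at once.
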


\begin{proof}
It suffices to show that $\phi$ maps $\Lambda(A_+)_{\le n}$ isomorphically onto $\SP(A)_{\le n}$ for all $n$. For each $m>n$, we have the commutative diagram:

$$
\begin{tikzcd}
                                                & \Lambda(A_+)_{\leq n} \arrow[d, "\phi"] \arrow[ldd, "\phi_m"', bend right] \arrow[rdd, "\phi_n", bend left] &                           \\
                                                & \mathrm{SP}(A)_{\leq n} \arrow[ld, "\pi_m"'] \arrow[rd, "\pi_n"]                                            &                           \\
\mathrm{SP}^m(A)_{\leq n} \arrow[rr, "\pi_n^m"] &                                                                                                             & \mathrm{SP}^n(A)_{\leq n}
\end{tikzcd}
$$
	By \Cref{lem:iso}, the maps $\phi_m$ and $\phi_n$ are isomorphisms, and so $\pi_n^m$ is also an isomorphism. It follows that $\pi_n=\lim\limits_{\leftarrow m}\pi_n^m$ is an isomorphism. Then since $\pi_n$ and $\phi_n$ are isomorphisms, we see that $\phi$ is an isomorphism.
\end{proof}

\begin{remark}
\begin{enumerate} \item As a consequence, we have that every (connected) free graded-commutative algebra $\Lambda(V)$ is the infinite symmetric product of some algebra. Indeed, consider $\QQ \oplus V$ with $\QQ$ in degree zero, with product given by $(c+v)\cdot (d+w) = cd+(cw+dv)$ for $c,d \in \QQ$, $v,w \in V$ (i.e. all products in $V$ are trivial) and units given by $c+0$. Then $\SP(\QQ\oplus V) \cong \Lambda(V)$.   \item Notice that \Cref{thm:algebra} holds over any field of characteristic zero, e.g. it applies to the $\ell$--adic cohomology of a smooth projective variety over a finite field of characteristic different from $\ell$.  \end{enumerate}
\end{remark}

Every commutative graded algebra (i.e., a commutative algebra in the ordinary sense whose multiplication respects the grading) can be viewed as a graded-commutative algebra by doubling the grading. This way, the category of commutative graded algebras over $\QQ$ embeds into the category of graded-commutative algebras over $\QQ$ as the full subcategory consisting of algebras concentrated in even degrees. Therefore, \Cref{thm:algebra} applies to connected commutative graded algebras as well:

\begin{corollary}\label{cor:gradedring}
For $R$ a connected commutative graded algebra over $\QQ$, we have $\SP(R)\cong\QQ[R_+]$. \qed
\end{corollary}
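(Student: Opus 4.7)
The plan is to deduce this corollary directly from \Cref{thm:algebra} via the grading-doubling embedding described in the paragraph preceding the statement. First I would make the identification precise: given a connected commutative graded $\QQ$-algebra $R$, define the graded-commutative algebra $\widetilde R$ by $\widetilde R_{2d} = R_d$ and $\widetilde R_{2d+1} = 0$, with multiplication inherited from $R$. Since every homogeneous element of $\widetilde R$ has even degree, all the Koszul signs $(-1)^{\deg a \cdot \deg b}$ appearing in the $\Sigma_n$-action on $\widetilde R^{\otimes n}$ are trivial, so the Koszul-signed permutation action coincides with the naive permutation action on $R^{\otimes n}$. Thus $\SP^n(\widetilde R) = \SP^n(R)$ as algebras for every $n$, and passing to the inverse limit yields $\SP(\widetilde R) = \SP(R)$.

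Next I would apply \Cref{thm:algebra} to the connected graded-commutative algebra $\widetilde R$, obtaining an isomorphism $\phi \colon \Lambda(\widetilde R_+) \xrightarrow{\cong} \SP(\widetilde R)$. The final step is to observe that $\widetilde R_+$ is concentrated in even degrees, so the free graded-commutative algebra $\Lambda(\widetilde R_+)$ is, by the discussion of the Poincar\'e series of $\Lambda$ on one-dimensional factors in the proof of \Cref{lem:iso} (together with the fact that $\Lambda$ sends direct sums to tensor products), simply the polynomial algebra on $\widetilde R_+$, i.e.\ $\Lambda(\widetilde R_+) \cong \QQ[\widetilde R_+] = \QQ[R_+]$.

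Combining these three steps gives $\SP(R) = \SP(\widetilde R) \cong \Lambda(\widetilde R_+) \cong \QQ[R_+]$, which is the desired conclusion. There is essentially no obstacle here: the corollary is a formal specialization of \Cref{thm:algebra}. The only point requiring a moment's care is the verification that the doubling functor commutes with $\SP^n$, but as noted this is immediate from the vanishing of the Koszul signs in purely even degree.
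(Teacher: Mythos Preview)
Your proposal is correct and follows exactly the approach the paper intends: the corollary is marked with \qed{} because it is an immediate specialization of \Cref{thm:algebra} via the grading-doubling embedding described in the preceding paragraph. You have simply spelled out in detail the two points the paper leaves implicit (that the Koszul signs are trivial in purely even degree, and that $\Lambda$ on an evenly-graded vector space is the polynomial algebra), which is fine.
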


\begin{example}\label{ex:algebrasphere} Consider the commutative graded algebra $R = \QQ[x]/(x^2)$ with $\deg(x) > 0$. Then $R_+$ is one-dimensional, spanned by the element $x$. By \Cref{cor:gradedring}, $\SP(R) \cong \QQ[x]$. See also the examples at the end of Section 3, in particular the end of \Cref{ex:cpm}.
\end{example}

We apply \Cref{cor:gradedring} to the theory of multisymmetric functions. Recall that the theory of symmetric and multisymmetric functions concerns the symmetric products of the commutative graded rings $\ZZ[x]$ and $\ZZ[x_1,\dots,x_m]$ $(m>1)$ where generators are put in degree one \cite{D99}, \cite{V05}. As mentioned before, the fundamental theorem of symmetric functions asserts that $\SP^n(\ZZ[x])$ is a (free) polynomial ring. Meanwhile $\SP^n(\ZZ[x_1,\dots,x_m])$ is not free. However, \Cref{cor:gradedring} implies the following generalization of the fundamental theorem of symmetric functions over the rationals:

\begin{corollary}(cf. \cite[Corollary 3.5]{V05}, \cite{D99}) 
The algebra of multisymmetric functions over $\QQ$, i.e. $\SP(\QQ[x_1,\dots,x_m])$, is a polynomial algebra. \qed
\end{corollary}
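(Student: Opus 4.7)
The plan is to deduce this immediately from \Cref{cor:gradedring}. The ring $R=\QQ[x_1,\dots,x_m]$ is a connected commutative graded algebra over $\QQ$ once we place the generators $x_1,\dots,x_m$ in a common positive degree (say degree one, as in the classical setup of multisymmetric functions). Connectedness is precisely the statement that $R_0 = \QQ$, which holds here, and there is no issue of finite type since \Cref{thm:algebra}, and hence \Cref{cor:gradedring}, was proved without any finite-dimensionality assumption via the reduction \Cref{lem:redtofintp}.

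Applying \Cref{cor:gradedring} directly yields $\SP(R) \cong \QQ[R_+]$. Here $R_+$ is the graded $\QQ$--vector space spanned by the set $\mathcal{M}$ of all nonconstant monomials in $x_1,\dots,x_m$, and $\QQ[R_+]$ denotes the free commutative graded algebra on this vector space. Choosing the basis $\mathcal{M}$ of $R_+$ identifies $\QQ[R_+]$ with the polynomial algebra $\QQ[\mathcal{M}]$ on countably many indeterminates, one for each nonconstant monomial. This is a polynomial algebra in the usual sense, which is the desired conclusion.

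There is essentially no obstacle to overcome: the content of the statement has already been absorbed into \Cref{thm:algebra} and its corollary. The only mild point worth flagging is that $R_+$ is infinite-dimensional, so the resulting polynomial algebra has infinitely many generators; this is harmless and is precisely the sense in which the ``singularities get resolved'' upon passing to the infinite symmetric product, as recalled in the introduction.
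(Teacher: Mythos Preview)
Your proposal is correct and matches the paper's approach exactly: the paper treats this corollary as an immediate consequence of \Cref{cor:gradedring} (hence the \qed with no displayed proof), after noting that $\QQ[x_1,\dots,x_m]$ with generators in degree one is a connected commutative graded algebra. Your additional remarks about the explicit basis $\mathcal{M}$ of nonconstant monomials and the infinite set of generators are helpful elaborations but not needed for the argument.
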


We end this section with another generalization of the fundamental theorem of symmetric functions in the category of graded-commutative algebras over the rationals.

\begin{proposition}\label{prop:twovarfree}
Let $A=\Lambda(x,y)$ where $\deg (x)=2r$ is even and $\deg (y)=2s-1$ is odd. Then $\SP^n(A)$ is a free algebra generated by $$[x],[x^2],\dots, [x^n]; [y],[xy],\dots, [x^{n-1}y].$$
\end{proposition}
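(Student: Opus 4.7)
The plan is to construct the natural surjection
$$ \psi: F := \Lambda([x], [x^2], \ldots, [x^n], [y], [xy], \ldots, [x^{n-1}y]) \longrightarrow \SP^n(A) $$
sending each formal generator to its namesake, and then show $\psi$ is an isomorphism by first proving surjectivity and then matching Poincaré series. Here $F$ is polynomial on the $n$ even classes $[x^k]$ (of degree $2rk$) and exterior on the $n$ odd classes $[x^{k-1}y]$ (of degree $2r(k-1)+2s-1$), so it has the right graded-commutative shape to be the target.

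Surjectivity reduces, via \Cref{lem:onto}, to expressing each $[x^k]$ ($k \geq 1$) and $[x^{k-1}y]$ ($k \geq 1$) in terms of the $2n$ named generators. Under the identification
$$ \SP^n(A) = (\QQ[x_1, \ldots, x_n] \otimes \Lambda(y_1, \ldots, y_n))^{\Sigma_n}, $$
the classes $[x^k] = \sum_i x_i^k$ are the power sums, so they lie in $\QQ[p_1, \ldots, p_n] = \QQ[[x], \ldots, [x^n]]$ by the fundamental theorem of symmetric polynomials. For the mixed classes with $k \geq n$, each $x_i$ is a root of $\prod_\ell(T - x_\ell)$, giving the Cayley--Hamilton-type identity
$$ x_i^n = \sum_{j=1}^n (-1)^{j-1} e_j\, x_i^{n-j}. $$
Multiplying by $y_i$ and summing over $i$ expresses $[x^n y]$ in terms of $[y], [xy], \ldots, [x^{n-1}y]$ with coefficients the $e_j \in \QQ[[x], \ldots, [x^n]]$; a straightforward induction on $k$ then handles all $k > n$.

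For injectivity it suffices, since both sides are graded, to show that their Poincaré series agree. That of $F$ factors as $\prod_{k=1}^n (1 + z^{2r(k-1)+2s-1})/(1 - z^{2rk})$, while by Macdonald's formula (as used in the proof of \Cref{lem:iso}) the series of $\SP^n(A)$ equals the coefficient of $t^n$ in $\prod_{i \geq 0} (1 + tz^{2ri+2s-1})/(1 - tz^{2ri})$. Writing $u = z^{2r}$ and $v = z^{2s-1}$, the required equality is precisely Heine's $q$-binomial theorem
$$ \sum_{n \geq 0} t^n \prod_{k=1}^n \frac{1 + u^{k-1}v}{1 - u^k} = \prod_{i \geq 0} \frac{1 + tu^i v}{1 - t u^i}, $$
a specialization of $\sum_n \tfrac{(a;q)_n}{(q;q)_n} t^n = \tfrac{(at;q)_\infty}{(t;q)_\infty}$ at $q = u$, $a = -v$. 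The main obstacle, as I see it, is spotting this classical $q$-series identity; once it is in hand, matching Poincaré series and promoting the surjection to an isomorphism is automatic.
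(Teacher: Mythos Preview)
Your proposal is correct and follows essentially the same route as the paper: both first use \Cref{lem:onto} together with the characteristic-polynomial/Newton recurrence to show the $2n$ classes generate, and then match Poincar\'e series via Macdonald's formula and the same $q$-series identity (the paper cites it from Macdonald's book, you call it Heine's $q$-binomial theorem). The only cosmetic difference is that the paper writes out the recurrence for $[x^k]$ explicitly alongside that for $[x^{k-1}y]$, whereas you dispatch the former by invoking the fundamental theorem of symmetric polynomials directly.
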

\begin{proof}
First we show that $\SP^n(A)$ is generated by $\{[x^k],[x^{k-1}y]\}_{1\le k\le n}$. From \Cref{lem:onto} we know that $\{[x^k],[x^{k-1}y]\}_{k\ge 1}$ generate $\SP^n(A)$. It remains to show that the elements $\{[x^{k}],[x^{k-1}y]\}_{k>n}$ are contained in the subalgebra generated by $\{[x^k],[x^{k-1}y]\}_{1\le k\le n}$.

Let $x_i$ denote $1\otimes\cdots \otimes x\otimes\cdots 1$, where $x$ is in the $i^{\text{th}}$ factor, and define $y_i$ in a similar fashion. Then recall by construction
\begin{align*}
    [x^k]&= x^k\otimes 1\otimes\cdots\otimes 1+\cdots+1\otimes\cdots\otimes 1\otimes x^k\\
    &=(x\otimes 1\otimes\cdots\otimes 1)^k+\cdots +(1\otimes\cdots\otimes 1\otimes x)^k =x_1^k+\cdots+x_n^k.
\end{align*}
Similarly $[x^{k-1}y]=x_1^{k-1}y_1+\cdots+x_n^{k-1}y_n$.
Let $$e_j(x)=\sum_{i_1<i_2<\dots<i_j} x_{i_1}x_{i_2}\dots x_{i_j} \quad(1\le j\le n)$$ be the elementary symmetric polynomials in $x_1,\dots,x_n$. It is well-known (from Newton's identities) that $e_j(x)$ is contained in the subalgebra generated by $[x],[x^2],\dots,[x^n]$. Notice that
\[
\prod_{i=1}^n(\lambda-x_i)=\lambda^n+\sum_{j=1}^n (-1)^j e_j(x) \lambda^{n-j}.
\]
Therefore for $1\le i\le n$ we have
\[
x_i^n+\sum_{j=1}^n (-1)^j e_j(x) x_i^{n-j}=0.
\]
Then for $k>n$, multiplying the above equation by $x_i^{k-n}$ (resp. $x_i^{k-n-1}y_i$) and summing over $i=1,\dots,n$ yields

\begin{align*}
    [x^k]&+\sum_{j=1}^n (-1)^j e_j(x) [x^{k-j}]=0,\\
    [x^{k-1}y]&+\sum_{j=1}^n (-1)^j e_j(x) [x^{k-1-j}y]=0.
\end{align*}
Hence inductively we see that $\{[x^k],[x^{k-1}y]\}_{k>n}$ are contained in the subalgebra generated by $\{[x^k],[x^{k-1}y]\}_{1\le k\le n}$.

Now, to prove that $\SP^n(A)$ is freely generated by $\{[x^k],[x^{k-1}y]\}_{1\le k\le n}$, it suffices to show the Poincar\'e series of $\SP^n(A)$ is that of the free algebra
\[
\Lambda([x],\dots,[x^n],[y],\dots,[x^{n-1}y]).
\]
Since the Poincar\'e series of $A$ is
\[
\frac{1+z^{2s-1}}{1-z^{2r}}=\sum_{i=0}^\infty z^{2ir}+\sum_{i=0}^\infty z^{2ir+2s-1},
\]
by \cite{M62} the Poincar\'e series of $\SP^n(A)$ is the coefficient of $t^n$ in
\[
\prod_{i=0}^\infty \frac{1+z^{2ir+2s-1}t}{1-z^{2ir}t}.
\]
From \cite[pp.19 Example 5]{M79} for variables $u,v,q$ we have
\[
\prod_{i=0}^\infty\frac{1-vq^i t}{1-u q^i t}=1+\sum_{n=1}^\infty \left(\prod_{i=1}^n\frac{u-vq^{i-1}}{1-q^i}\right)t^n.
\]
So, by taking $u=1, v=-z^{2s-1}, q=z^{2r}$, we see that the Poincar\'e series of $\SP^n(A)$ is
\[
\prod_{i=1}^n\frac{1+z^{2ir+2s-3}}{1-z^{2ir}}
\]
as desired.
\end{proof}

\section{Symmetric products of spaces and the rational Dold--Thom theorem}

Given a (pointed) connected cell complex $X$, one can form the $n$--fold symmetric product $\SP^n(X)$ by quotienting the $n$--fold product $X^{\times n}$ by the natural action of the symmetric group $\Sigma_n$ on $n$ elements. The space $\SP^n(X)$ naturally includes into $\SP^{n+1}(X)$, and in the limit of this directed system over $n$ one obtains the \textit{infinite symmetric product} $\SP(X)$.

To study the rational homotopy type of a space $X$, one can associate to it its commutative differential graded algebra (cdga) of piecewise-linear forms $\APL(X)$. By a \textit{model} for a space $X$ we mean any cdga which is quasi-isomorphic to $\APL(X)$. The rational cohomology of $X$ is that of any of its models, and if $X$ is furthermore a nilpotent space of \textit{finite type}, i.e. $H_n(X;\QQ)$ is finite dimensional for all $n$, then its rational (co)homotopy groups are obtained from any \textit{free} model (i.e. one whose underlying graded-commutative algebra is free) by considering the cohomology of the differential restricted to the subspace of indecomposable elements. We refer the reader to \cite{BG76}, \cite{S77}. 

We remark that $\SP(X)$ has a natural $H$-space structure, and as such its fundamental group is abelian and acts trivially on the higher homotopy groups; in particular, $\SP(X)$ is a nilpotent space. If $X$ is of finite type, then so is $\SP(X)$, as will be evident from the proof of the following theorem combined with the observation that each $\SP^n(X)$ is of finite type as well (e.g. we can apply Macdonald's formula for the Poincar\'e polynomials of finite symmetric products \cite{M62}). 

\begin{theorem}\label{thm:topology} Let $(A, d)$ be a connected model of a connected pointed space $X$. Then a model for $\SP(X)$ is given by $(\Lambda (A_+), D)$, where $D$ is obtained by extending $d$ on $A_+$ to $\Lambda (A_+)$ as a derivation.
\end{theorem}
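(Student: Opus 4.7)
The strategy is to exploit the interaction between $\APL$ and (co)limits, combined with \Cref{thm:algebra}, in order to identify $(\Lambda(A_+), D)$ with $\APL(\SP(X))$ up to quasi-isomorphism. I will compare the topological tower $\{\SP^n(X)\}_n$ with the algebraic tower $\{\SP^n(A)\}_n$, pass to limits, and then invoke \Cref{thm:algebra} to rewrite the limit as $\Lambda(A_+)$.

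First, for each finite $n$, I would verify that $(\SP^n(A), d)$, with $d$ induced from the tensor-product differential on $A^{\otimes n}$, is a model for $\SP^n(X)$. This rests on two standard facts: the tensor power $A^{\otimes n}$ is a model for $X^{\times n}$, and over $\QQ$ the averaging projection associated to a finite group action yields a natural quasi-isomorphism between the invariants in $\APL$ of the total space and $\APL$ of the quotient. Applied to $\Sigma_n$ acting on $X^{\times n}$ by permutation, this gives $(A^{\otimes n})^{\Sigma_n} \simeq \APL(X^{\times n}/\Sigma_n) = \APL(\SP^n(X))$.

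Next, I would check that these finite-level quasi-isomorphisms assemble into a compatible morphism of towers. The augmentation $\varepsilon\colon A\to \QQ$ commutes with $d$, since $d$ raises degree by one and $A_0=\QQ$ sits in the bottom degree, so $\varepsilon\circ d$ vanishes identically; hence each $\pi_n^{n+1}\colon \SP^{n+1}(A)\to \SP^n(A)$ is a cdga map, and corresponds, up to homotopy, to the map $\APL(\SP^{n+1}(X))\to \APL(\SP^n(X))$ induced by the standard inclusion $\SP^n(X)\hookrightarrow \SP^{n+1}(X)$. Passing to the limit, and using that $\APL$ turns colimits into limits, we obtain $\APL(\SP(X))\simeq \lim_n \APL(\SP^n(X))\simeq \lim_n \SP^n(A) = \SP(A)$. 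The main technical obstacle here is ensuring that $\lim$ preserves the quasi-isomorphisms from the previous step; this is handled by the surjectivity of $\pi_n^{n+1}$ established in \Cref{lem:onto} (so the tower is Mittag--Leffler), together with the degree-wise stabilization seen in \Cref{lem:iso}: in each fixed degree $d$, the inverse system $\{\SP^n(A)_d\}_n$ is eventually constant for $n\ge d$, so the limit is computed degree-wise with no derived-limit obstruction.

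Finally, by \Cref{thm:algebra}, $\phi\colon \Lambda(A_+)\to \SP(A)$ is an isomorphism of graded-commutative algebras. To finish the proof it is enough to observe that $\phi$ intertwines $D$ with the differential on $\SP(A)$ inherited from the $A^{\otimes n}$: on a generator $a\in A_+$, a direct calculation from the definition of $[a]$ and the Leibniz rule on $A^{\otimes n}$ gives $d[a]=[da]$, i.e.\ $d\,\phi(a)=\phi(da)$, and since both $D$ and $d$ are derivations that agree on the generating subspace $A_+$, they agree on all of $\Lambda(A_+)$. Thus $(\Lambda(A_+), D)\cong (\SP(A), d)\simeq \APL(\SP(X))$, so $(\Lambda(A_+), D)$ is a model for $\SP(X)$, as claimed.
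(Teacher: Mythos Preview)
Your proposal is correct and follows essentially the same route as the paper: model each $\SP^n(X)$ by $\SP^n(A)$, pass to the inverse limit using degree-wise stabilization (\Cref{lem:iso}) together with the identification $\APL(\colim_n \SP^n(X))\simeq \lim_n \APL(\SP^n(X))$, and then invoke the algebraic isomorphism $\phi\colon \Lambda(A_+)\to \SP(A)$ after checking it respects the differentials. The only place to tighten is the phrase ``$\APL$ turns colimits into limits'' --- this holds here for the specific sequential colimit of cellular inclusions (the paper cites \cite{Hi14} and \cite{BG76}), not for arbitrary colimits, and you should ensure the ladder of quasi-isomorphisms $\SP^n(A)\to \APL(\SP^n(X))$ commutes strictly rather than only up to homotopy before taking the limit.
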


This result, under the additional assumptions that $X$ is simply connected and that $A$ is a free algebra, has already been obtained in \cite[Proposition 3]{ScSt99}. We remark that the proof therein makes use of the Dold--Thom theorem. 

\begin{proof}
A model for the symmetric product $\SP^n(X)$ is given by $\SP^n(A) = (A^{\otimes n})^{\Sigma_n}$ equipped with the induced differential \cite[\textsection 2]{FT10}. Note that the map $\Lambda(A_+) \xrightarrow{\phi_n} \SP^n(A)$ considered in Section 2 commutes with the differentials. Namely, for $a \in A_+$ we have $Da = da \in A_+$ and hence $\phi_n(Da) = da\otimes 1 \otimes \cdots \otimes 1 + \cdots + 1 \otimes \cdots \otimes 1 \otimes da = d\phi(a)$; since $D$ on $\Lambda(A_+)$ is obtained by extending $d$ as a derivation, the claim follows.

Further, note that the composition of $\phi_n$ and the map $\SP^n(A) \to \SP^{n-1}(A)$ induced by the natural inclusion $\SP^{n-1}(X) \hookrightarrow \SP^n(X)$ is the map $\phi_{n-1}$ (see e.g. \cite[Remark 2.7]{FT10}). We thus have the commutative diagram
\vspace{1em}
$$
\begin{tikzcd}
                 &                                                  & {(\Lambda(A_+), D)} \arrow[ld, "\phi_{n+1}"'] \arrow[d, "\phi_n"] \arrow[rd, "\phi_{n-1}"] &                                                  &        \\
\cdots \arrow[r] & {(\SP^{n+1}(A),d)} \arrow[r] & {(\SP^n(A),d)} \arrow[r]                                             & {(\SP^{n-1}(A),d)} \arrow[r] & \cdots
\end{tikzcd}$$
\vspace{1em}

As a consequence, we have an induced map $(\Lambda(A_+), D) \xrightarrow{\phi} \lim\limits_{\leftarrow n} (\SP^n(A), d)$ to the inverse limit of the bottom row (in the category of cdga's). By \Cref{lem:iso}, the map $(\SP^n(A),d) \to (\SP^{n-1}(A),d)$ is an isomorphism in degrees $\leq n-1$. (In particular, the inclusion $\SP^{n-1}(X) \hookrightarrow \SP^n(X)$ is an isomorphism on rational cohomology in degrees $\leq n-2$.) Hence from our description of (the algebra underlying) the inverse limit in Section 2, we conclude that $\phi$ is an isomorphism of cdga's.

Now let us confirm that the inverse limit $\lim\limits_{\leftarrow n} (\SP^n(A), d)$ is in fact a model for $\SP(X)$. First of all, recall that the functor of piecewise-linear forms $\mathrm{Top} \xrightarrow{\APL} \mathrm{CDGA}_{\QQ}$ is given by $\APL(X) = \Hom_{\mathrm{sSet}}(\Sing_*(X), \mathfrak{A}_{\bullet}^*)$, where $\mathfrak{A}_{\bullet}^*$ denotes the simplicial cdga of polynomial forms (see e.g. \cite{Hess}). Using \cite[Theorem 14.3]{Hi14} and \cite[p.82]{BG76} (upon approximating $X$ by a cell complex if necessary), we have \begin{align*} \APL(\SP(X)) &= \APL(\lim\limits_{\rightarrow n} \SP^n(X)) = \Hom_{\mathrm{sSet}}(\Sing_*(\lim\limits_{\rightarrow n} \SP^n(X)), \mathfrak{A}_{\bullet}^*) \\ &\simeq \Hom_{\mathrm{sSet}}(\lim\limits_{\rightarrow n} \Sing_* \SP^n(X), \mathfrak{A}_{\bullet}^*) \simeq \lim\limits_{\leftarrow n} \Hom_{\mathrm{sSet}}(\Sing_* \SP^n(X), \mathfrak{A}_{\bullet}^*) \\ &= \lim\limits_{\leftarrow n} \APL(\SP^n(X)). \end{align*}

Now from the commutative diagram of models

$$ 
\begin{tikzcd}
\cdots \arrow[r] & (\SP^n(A),d) \arrow[d] \arrow[r] & (\SP^{n-1}(A),d) \arrow[d] \arrow[r] & \cdots \\ \cdots \arrow[r]  & \APL(\SP^n(X)) \arrow[r] & \APL(\SP^{n-1}(X)) \arrow[r] & \cdots
\end{tikzcd} 
$$ 
induced by the inclusions of symmetric products, where the vertical arrows are quasi-isomorphisms, we obtain a map of inverse limits $\lim\limits_{\leftarrow n} (\SP^n(A), d) \to \APL(\SP(X))$. Since the inclusion $\SP^{n-1}(X) \hookrightarrow \SP^n(X)$ is an isomorphism on rational cohomology in degrees $\leq n-2$ as we saw, it follows that this map of inverse limits is a quasi-isomorphism. We conclude that $\lim\limits_{\leftarrow n} (\SP^n(A), d)$ maps quasi-isomorphically to $\APL(\SP(X))$. \end{proof}

\begin{corollary}[Rational Dold--Thom] For a connected space $X$ of finite type, we have $\pi_*(\SP(X)) \otimes \QQ \cong \tilde{H}_*(X;\QQ)$. \end{corollary}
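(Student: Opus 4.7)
The plan is to combine \Cref{thm:topology} with the standard dictionary from rational homotopy theory that extracts rational (co)homotopy groups from a free model via the indecomposables.

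First, take $(A,d) = (\APL(X),d)$ as a connected model of $X$ (connected since $X$ is). By \Cref{thm:topology}, $(\Lambda(A_+), D)$ is then a model for $\SP(X)$. This model is free by construction, and since $D$ is obtained by extending $d$ from $A_+$ as a derivation, the subspace of indecomposables of $\Lambda(A_+)$ is exactly $A_+$, and $D$ restricted to this subspace coincides with $d$. In particular, the cohomology of the indecomposable subspace is $H^*(A_+, d) \cong \tilde{H}^*(A,d) \cong \tilde{H}^*(X;\QQ)$.

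Next, I verify that the hypotheses for applying the free-model recipe for rational (co)homotopy are met. As recalled in the exposition preceding \Cref{thm:topology}, $\SP(X)$ is nilpotent, since it is an $H$-space and so has an abelian fundamental group acting trivially on higher homotopy; it is moreover of finite type whenever $X$ is, by Macdonald's formula applied to each $\SP^n(X)$ together with the stabilization given by \Cref{lem:iso}. Hence the cohomology of the indecomposables of any free model computes the rational cohomotopy of $\SP(X)$, yielding $\pi^*(\SP(X)) \otimes \QQ \cong \tilde{H}^*(X;\QQ)$. Dualizing (which is permitted since $\SP(X)$ is of finite type) gives the desired isomorphism $\pi_*(\SP(X)) \otimes \QQ \cong \tilde{H}_*(X;\QQ)$.

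There is no substantial obstacle here: the bulk of the work has already been done in \Cref{thm:topology}, which furnishes the crucial free model with the tautological description of its indecomposables. The only point requiring care is that the indecomposable subspace of $\Lambda(A_+)$ is genuinely $A_+$ and not a larger space; this is immediate because the product in $\Lambda(A_+)$ sends $A_+ \otimes A_+$ into the decomposables. Everything else is a bookkeeping exercise identifying $H^*(A_+,d)$ with $\tilde{H}^*(X;\QQ)$.
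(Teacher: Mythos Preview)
Your proof is correct and follows essentially the same route as the paper: use \Cref{thm:topology} to obtain the free model $(\Lambda(A_+),D)$, observe that the linear part of $D$ on the generators $A_+$ is just $d$, and read off the rational (co)homotopy as $\tilde{H}^*(X;\QQ)$. Your added care in verifying nilpotence and finite type of $\SP(X)$ and in dualizing at the end is reasonable elaboration of what the paper leaves implicit.
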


\begin{proof} The rational (co)homotopy groups of $\SP(X)$ are given by the linearized cohomology of a free model for $\SP(X)$. Concretely, by \Cref{thm:topology} we take our model $(\Lambda(A_+), D)$ for $\SP(X)$, where $(A, d)$ is a model for $X$, and consider the cohomology of the linear part of the differential $D$. By construction this is the same as the cohomology of $(A_+, d)$, i.e. the reduced cohomology of $X$.\end{proof}

One can also see with this line of reasoning that $\SP(X)$ has the rational homotopy type of $\prod_{i} K(\tilde{H}_i(X;\QQ),i)$. Indeed, consider the quotient map of complexes $\Lambda(A_+) \to \Lambda(A_+)/\Lambda(A_+)_+ \cdot \Lambda(A_+)_+ \cong A_+$ to the indecomposables. This is the dual of the rational Hurewicz map, and is evidently surjective. It is well known that a space has the homotopy type of a product of Eilenberg--MacLane spaces if and only if the Hurewicz map is a split injection; the claim now follows by rationalizing and dualizing.

\begin{remark} One can also use the rational Dold--Thom theorem to prove the algebraic \Cref{thm:algebra} in the finite type case; see also \cite[Proposition 3]{ScSt99}. Indeed, take a finite type connected algebra $A$. Equipping it with the trivial differential, we can realize this cdga as the cohomology of a (formal) rational space $X$. From Dold--Thom, we know that $\SP(X) \simeq \prod_i K(\widetilde{H}^i(X;\QQ), i)$, so the (rational) cohomology of $\SP(X)$ is a free algebra on the reduced cohomology of $X$, which is $A_+$. We have $\SP(A) = \SP(H^*(X))= \lim\limits_{\leftarrow n}\SP^n(H^*(X))$, and using the finite type assumption and that the cohomology of $\SP^n(X)$ stablizes, this is isomorphic to $\lim\limits_{\leftarrow n}H^*(\SP^n(X)) \cong H^*(\lim\limits_{\rightarrow n}\SP^n(X)) = H^*(\SP(X))$. We conclude that $\SP(A) \cong \Lambda(A_+)$.\end{remark}

\begin{example}\label{exm:sphere} Take the model $\left(\Lambda(x, y), d\right)$ for the two--sphere $S^2$, where $\deg x = 2, \deg y = 3$, and $dx = 0, dy = x^2$. By \Cref{thm:topology}, a model for $\SP(S^2)$ is given by $\Lambda(\{[x^k], [x^ky] \}, D)$, where $k \geq 0$, and the differential $D$ is given by \begin{align*} D[x^k] &= 0, \\ D[x^ky] &= [x^{k+2}].  \end{align*}
From here we see that the cohomology of the linear part of $D$, restricted to the indecomposables $[x^k]$, $[x^k y]$, is spanned by $[x]$. That is, $\pi_2\left(\SP(S^2)\right) \otimes \QQ \cong \QQ$ and all other rational homotopy groups are trivial. This is as expected, since $\SP(S^2)$ is homeomorphic to $\CC\PP^\infty$.

Alternatively, since $S^2$ is formal, we can take $\left(\QQ[x]/(x^2), d=0\right)$ with $\deg x = 2$ as a model for $S^2$, and immediately reach the above conclusion; see \Cref{ex:algebrasphere}. \end{example}

\begin{example}\label{ex:cpm}
Take the model $(A,d)=\left(\Lambda(x,y),dx=0, dy=x^{m+1}\right)$ for $\CC\PP^m$, where $\deg x=2$ and $\deg y=2m+1$. By \Cref{prop:twovarfree} $$\SP^n(A)=\Lambda\left([x],[x^2],\dots, [x^n]; [y],[xy],\dots, [x^{n-1}y]\right)$$ with the induced differential $d[x^k]=0,d[x^{k-1} y]=[x^{m+k}]$ for $k\ge 1$ is a \textit{free} model for $\SP^n(\CC\PP^m)$. Then by quotienting out contractible pairs of generators, we obtain a minimal Sullivan model for $\SP^n(\CC\PP^m)$, that is
\begin{align*}
    \Lambda\left([x],\dots,[x^n]; [y],\dots,[x^{n-1}y]\right)&\text{ if } n\le m,\\
    \Lambda\left([x],\dots, [x^m]; [x^{n-m}y],\dots, [x^{n-1}y]\right)&\text{ if } n>m,
\end{align*}
with the induced differential. In either case, the differential $d$ takes generators of the form $[x^k]$ to zero and generators of the form $[x^k y]$ to elements in the subalgebra generated by the $[x^k]$'s, which must be sums of decomposables for degree reasons. So the linearized differential vanishes on indecomposables, and we obtain
\[
\pi_*\left(\SP^n(\CC\PP^m)\right)\otimes \QQ\cong
\begin{cases}
\QQ &\text{if } *=2,\dots,2\min\{m,n\};\\
\QQ &\text{if } *=2\max\{m,n\}+1,\dots,2n+2m-1;\\
0 &\text{otherwise}.
\end{cases}
\]

We sketch how to obtain the rational cohomology of $\SP^n(\CC\PP^m)$ from our model. Let $$A_j=\Lambda\left([x],[x^2],\dots, [x^n]; [y],[xy],\dots, [x^{j-1}y]\right)\quad (0\le j\le n)$$ with the restricted differential be an increasing sequence of sub-cdga's of $\SP^n(A)$. Here $A_0=\left(\Lambda\left([x],[x^2],\dots, [x^n]\right),d=0\right)$ and $A_n=\SP^n(A)$. Using the short exact sequence of cochain complexes
\[
0\to A_j\to A_{j+1}\to A_{j+1}/A_j\cong [x^{j}y]A_j\to 0
\]
and that $[x^{m+1}],\dots,[x^{m+n}]$ is a regular sequence in the algebra $\Lambda\left([x],\dots,[x^n]\right)$ \cite[Prop. 2.9]{CKW}, one can inductively show that the cohomology of $A_j$ is isomorphic to
\[
\Lambda\left([x],\dots,[x^n]\right)/\left([x^{m+1}],\dots,[x^{m+j}]\right).
\] 
In particular, if $m=1$ then the cohomology of $\SP^n(\CC\PP^1)$ is isomorphic to $$\Lambda\left([x],\dots,[x^n]\right)/\left([x^2],\dots,[x^{n+1}]\right).$$ From Newton's identities, one sees that $[x^{n+1}]=\pm\frac{1}{n!}[x]^{n+1}$ (modulo $[x^2],\dots,[x^n]$), so
$$\Lambda\left([x],\dots,[x^n]\right)/\left([x^2],\dots,[x^{n+1}]\right)\cong \Lambda\left([x]\right)/\left([x]^{n+1}\right).$$
This is as expected, since $\SP^n(\CC\PP^1)$ is homeomorphic to $\CC\PP^n$. Algebraically, this corresponds to $\SP^n\left(\QQ[x]/(x^2)\right) \cong \QQ[x]/(x^{n+1})$. 
\end{example}

\begin{example}
(See also \cite[p. 5]{FT10}.) Take the model $\left(\Lambda(x,y), dy=x^2\right)$ for $S^{2m}$ where $\deg x=2m$ and $\deg y=4m-1$. Then similarly to before, a minimal model for $\SP^n(S^{2m})$ is
\[
\Lambda\left([x],[x^{n-1}y]\right) \text{ with } d[x]=0\text{ and } d[x^{n-1}y]=\pm \frac{1}{n!}[x]^{n+1},
\]
which is isomorphic to $(\Lambda(x, z), d)$ with $dx = 0$ and $dz = x^{n+1}$. 

Taking the model $\left(\Lambda(y), dy=0\right)$ for $S^{2m-1}$ where $\deg y=2m-1$, we see that a minimal model for $\SP^n(S^{2m-1})$ is $\left(\Lambda([y]\right), d[y]=0)$. This means that $\SP^n(S^{2m-1})$ has the rational homotopy type of $S^{2m-1}$.
\end{example}

\end{document}